\documentclass[final]{siamltex}

\usepackage{amsmath, amsfonts,
  verbatim, fancyhdr, graphicx, array, subfigure,
  color, listings, afterpage, epsfig, boxedminipage, rotating, url}

\newcommand{\citet}{\cite}
\newcommand{\citep}{\cite}

\newcommand{\E}{\ensuremath{\text{E}\,}}

\newcommand{\pmat}[1]{\ensuremath{\begin{pmatrix} #1 \end{pmatrix}}}

\title{Matrix-Free Approximate Equilibration}

\author{Andrew M.~Bradley\footnotemark[1]\ \footnotemark[3]\ \footnotemark[4]
  \and Walter Murray\footnotemark[2]\ \footnotemark[3]}

\begin{document}
\maketitle

\renewcommand{\thefootnote}{\fnsymbol{footnote}}

\footnotetext[1]{Dept.~of Geophysics, Stanford University ({\tt
    ambrad@cs.stanford.edu}).}
\footnotetext[2]{Inst.~for Comp.~and Math.~Eng., Stanford University ({\tt
    walter@stanford.edu}).}
\footnotetext[3]{Supported by a National Science Foundation Graduate Research
  Fellowship and a Scott A.~and Geraldine D.~Macomber Stanford Graduate
  Fellowship.}
\footnotetext[4]{Additional support from the Office of Naval Research and the
  Army High Performance Computing Research Center.}

\begin{abstract}
The condition number of a diagonally scaled matrix, for appropriately chosen
scaling matrices, is often less than that of the original. Equilibration scales
a matrix so that the scaled matrix's row and column norms are equal. Scaling can
be approximate. We develop approximate equilibration algorithms for nonsymmetric
and symmetric matrices having signed elements that access a matrix only by
matrix-vector products.
\end{abstract}

\begin{keywords}
  binormalization, doubly stochastic, matrix equilibration, matrix-free algorithms
\end{keywords}

\begin{AMS} 15A12, 15B51, 65F35 \end{AMS}

\pagestyle{myheadings}
\thispagestyle{plain}
\markboth{A.M.~BRADLEY AND W.~MURRAY}
{MATRIX-FREE APPROXIMATE EQUILIBRATION}

\section{Introduction}
For a square, nonnegative, real, nonsymmetric matrix $B$, equilibration in the
1-norm finds $x, y > 0$ such that $X B y = e$ and $Y B^T x = e$, where $X =
\text{diag}(x)$ and similarly for other vectors, and $e$ is the vector of all
ones. Hence $X B Y$ is doubly stochastic. For a symmetric matrix, symmetric
equilibration finds $x > 0$ such that $X B x = e$. If $B = A \circ A$ for $A$ a
real, possibly signed, matrix, where $\circ$ denotes the element-wise product,
then these equations equilibrate $A$ in the 2-norm. Equilibration in the 2-norm
is often called \emph{binormalization}. Approximate equilibration scales a
matrix so that its row and column norms are almost equal. Both the exactly and
approximately equilibrated matrices often have smaller condition numbers than
the original. In this paper we always use the 2-norm condition
number. Equilibration is particularly usefully applied to matrices for which
simpler diagonal scaling methods fail: for example, to indefinite symmetric
matrices. In Section \ref{sec:dscale}, we compare equilibration with Jacobi
scaling when applied to symmetric matrices.

In some problems, accessing elements of a matrix is expensive. What are often
called \emph{matrix-free} algorithms access a matrix only by matrix-vector
products. If $A$ is a matrix having nonnegative elements, then many algorithms
already exist to equilibrate $A$ using only matrix-vector products: for example,
the Sinkhorn-Knopp iteration. But if $A$ has signed elements, then one must
obtain $|A|$ to use these algorithms, which requires accessing the elements of
$A$. In Section \ref{sec:algs}, we develop matrix-free approximate equilibration
algorithms for square nonsymmetric and symmetric matrices having signed
elements, and we report the results of numerical experiments with these
algorithms in Section \ref{sec:numexp}.

\section{Diagonal scaling of symmetric matrices} \label{sec:dscale}
\begin{figure}[tb]
\centering
\includegraphics[width=5in]{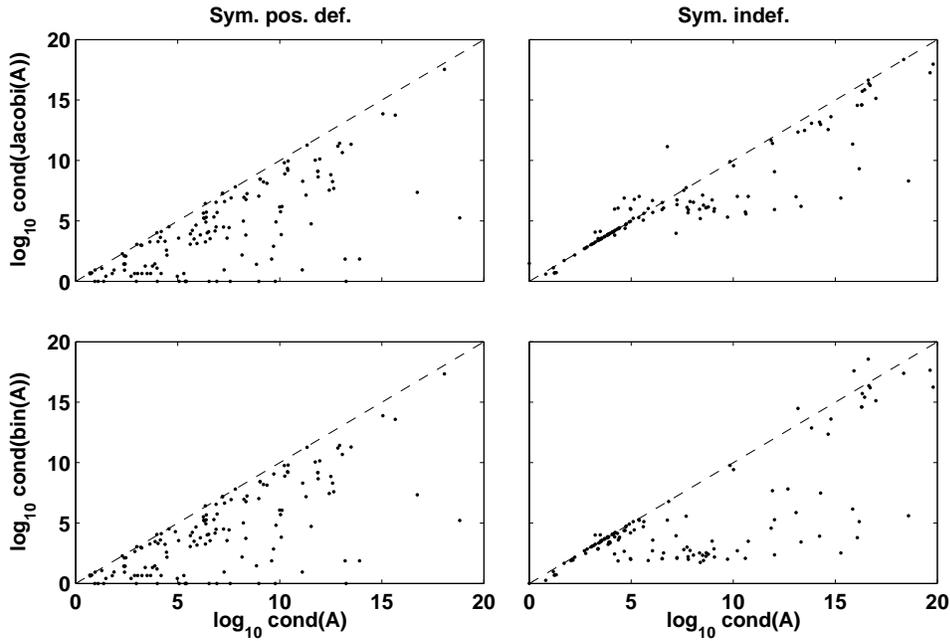}
\caption{Numerical study of conditioning of symmetric matrices. The four plots
  show condition number of the Jacobi-scaled (top) and binormalized (bottom)
  symmetric positive definite (left) and indefinite (right) matrix as a function
  of the condition number of the unscaled matrix.}
\label{fig:def}
\end{figure}

Jacobi scaling pre- and post-multiplies a square, usually symmetric positive
definite (spd) matrix by a diagonal matrix such that the scaled matrix has unit
diagonal elements.

Numerical experiments show that the condition number of the equilibrated or
Jacobi-scaled matrix is often considerably less than that of the original
matrix. Figure \ref{fig:def} shows the results of a numerical experiment using
323 symmetric matrices from the University of Florida Sparse Matrix Collection
\cite{davis_uf}; see Section \ref{sec:numexp} for further details on the data
set. The matrices used in this experiment have sizes 10 to 36441, with a median
size of 5000. Figure \ref{fig:def} shows the condition number of the scaled
matrix as a function of that of the unscaled matrix. Two diagonal scaling
methods are used: Jacobi (top) and binormalization (bottom). Matrices are
divided into positive definite (left) and indefinite (right).

This experiment shows that if a matrix is spd,
then equilibration and Jacobi scaling reduce the condition number by about the
same amount; indeed, the two corresponding plots are almost identical. It also
shows that when the two methods are applied to an indefinite matrix---in the
case of Jacobi scaling, replacing a zero diagonal element with a one---the
condition number of the Jacobi-scaled matrix is likely to be substantially
greater than that of the equilibrated matrix. For these reasons, equilibration
of symmetric indefinite matrices can be thought of as a generalization of Jacobi
scaling of spd matrices, raising the question of the relationship between the
two scaling methods when applied to spd matrices.


Let $A$ be an $n \times n$ spd matrix whose diagonal elements are all one. Let
$\kappa(\cdot)$ denote the 2-norm condition number of a matrix. Van der Sluis
showed that $\kappa(A) \le n \min_d \kappa(DAD)$ (Theorem 4.1 of
\cite{vandersluis}) and that if $A$ has at most $q$ nonzero elements in any row,
then $\kappa(A) \le q \min_d \kappa(DAD)$ (Theorem 4.3 of \cite{vandersluis}). A
matrix $C$ has \emph{Young's property A} if there exists a permutation matrix
$P$ such that
\begin{equation*}
  P C P^T = \pmat{D_1 & C_1 \\ C_2 & D_2}
\end{equation*}
and $D_1$ and $D_2$ are square diagonal matrices. Forsthye and Straus showed
that if the matrix $A$ has in addition Young's property A, then $\kappa(A) =
\min_d \kappa(DAD)$ (Theorem 4 of \cite{forsythe-straus}). In summary, these
three theorems state that Jacobi scaling is within a factor of $n$, $q$, or 1 of
optimal among all diagonal scaling matrices.

If $A$ is spd, then so is $B \equiv A \circ A$ by the Schur Product Theorem
(see, for example, Theorem 7.5.3 of \cite{haj}). Suppose $A$ has unit diagonal
elements. Then so does $B$. Moreover, $B_{ij} < 1$ for $i \ne j$.
Suppose Jacobi scaling---replacing a zero diagonal element with a one---has been
applied to an $n \times n$ symmetric matrix $\bar A$ to yield the matrix $A$,
and again let $B \equiv A \circ A$. Consider the vector of row sums $s \equiv B
e$. If $\bar A$ is indefinite, $0 \le s_i < \infty$. If $\bar A$ is spd, as
every diagonal element of $B$ is 1, $s_i \ge 1$; and as every off-diagonal
element $B_{ij} < 1$, $s_i < n$.

Let $\mu(v)$ be the mean of the elements of an $n$-vector $v$ and
$\text{var}(v)$ the variance: $\text{var}(v) \equiv n^{-1} \sum_i (v_i -
\mu(v))^2$. If a matrix is binormalized, then the variance of the vector of its
row 2-norms is 0. If $\bar A$ is indefinite, $\text{var}(s)$ can be arbitrarily
large. But if $A$ is spd, then $\text{var}(s) < (n-1)^2$. For as each $1 \le s_i
< n$, $(s_i - \mu(s))^2 < (n-1)^2$, and so $n^{-1} \sum_i (s_i - \mu(s))^2 <
n^{-1} \sum_i (n-1)^2 = (n-1)^2$.

From the other direction, an immediate corollary of inequality 2 in
\cite{livne04spd} is that if an spd matrix $\bar A$ is equilibrated in the
2-norm to form $\tilde A$, then $n^{-1/2} < \tilde A_{ii} \le 1$ (the upper
bound follows immediately from equilibration to unit row and column 1-norms); if
$\bar A$ is indefinite, then of course $-1 \le \tilde A_{ii} \le 1$.

In summary, if a matrix is spd, Jacobi scaling produces a matrix that is not
arbitrarily far from being binormalized, and binormalization produces a matrix
whose diagonal elements are bounded below and above by positive numbers. The
bounds depend on the size of the matrix. If a matrix is symmetric indefinite,
then neither statement holds.

\section{Algorithms} \label{sec:algs}
Sink\-horn and Knopp analyzed the convergence properties of the iteration
\eqref{eq:sk-nonsym}:
\begin{equation} \label{eq:sk-nonsym}
  r^{k+1} = (B c^k)^{-1}, \quad c^{k+1} = (B^T r^{k+1})^{-1}.
\end{equation}
The reciprocal is applied by element. $c^0$ is a vector whose elements are all
positive. According to Knight \cite{knight}, the iteration was used as early as
the 1930s.

Parlett and Landis \cite{parlett-landis} generalized Sinkhorn and Knopp's
convergence analysis and developed several new algorithms, one of which, EQ,
substantially outperformed the Sinkhorn-Knopp iteration on a test set. Khachiyan
and Kalantari \cite{khachiyan-kalantari} used Newton's method to scale positive
semidefinite symmetric matrices. Livne and Golub \cite{livne04bin} developed
algorithms for symmetric and nonsymmetric matrices based on the
Gauss-Seidel-Newton method. Knight and Ruiz \cite{knight-ruiz} devised an
algorithm based on an inexact Newton method that uses the conjugate gradients
iteration.

Nonuniqueness of equilibration in the infinity norm motivates multiple
algorithms that consider both efficiency and quality of the scaling under
criteria other than the infinity norms of the rows and columns. A matrix can be
scaled in the infinity norm if it has no zero rows or columns. The simplest
nonsymmetric algorithm is first to scale the rows (columns), then to scale the
columns (rows). After the first scaling, the largest number in the matrix is 1,
and the second scaling cannot produce numbers that are larger than 1. Therefore,
scaling is achieved after one iteration. Bunch \cite{bunch} developed an
algorithm that equilibrates any symmetric matrix in the infinity norm. More
recently, Ruiz \cite{ruiz} developed another iteration that compares favorably
with Bunch's algorithm. He extended the method to 1- and 2-norms and showed
convergence results for these algorithms as strong as, and based on, those by
Parlett and Landis \cite{parlett-landis} for their algorithms.

Each of these algorithms is iterative and yields a sequence of matrices
converging to a doubly stochastic matrix. A user can terminate the iteration
early to yield an approximately equilibrated matrix; hence these algorithms may
be viewed as approximate equilibration algorithms.

To date, it appears that all scaling algorithms for matrices having signed
elements require access to the elements of the matrix. If $A$ is nonnegative,
the situation is much different; for example, the Sinkhorn-Knopp algorithm
requires only the matrix-vector products (mvp) $A x$ and $A^T x$. For general
matrices, algorithms need at least mvp of the form $|A| x$ (1-norm), $(A \circ
A) x$ (2-norm), or similar expressions, and their transposes. We introduce
approximate scaling algorithms for equilibration in the 2-norm that require only
the mvp $A x$ and $A^T x$, where $x$ is a random vector. Algorithms that compute
the mvp with a random vector have been developed to solve other problems. For
example, Bekas, Kokiopoulou, and Saad \cite{saad} developed a method to estimate
the diagonal elements of a matrix; and Chen and Demmel \cite{chen-demmel}, to
balance a matrix prior to computing its eigenvalues. Our algorithms also have a
connection to the methods of \emph{stochastic approximation}
\cite{book-hard-stoch-approx}.

We want to emphasize that because the algorithms we propose access a matrix
having signed elements only through a sequence of mvp, we cannot expect them to
be faster than, or even as fast as, algorithms that access the elements directly
when applied to matrices for which direct access to the elements is possible and
efficient. Our algorithms are useful only if a matrix has signed elements that
are impossible or inefficient to access directly; it appears there are not
algorithms already available to solve this problem. It is also desirable that
only a small number, relative to the size of the matrix, of mvp are required.

\subsection{Existence and uniqueness} \label{sec:theory}
A matrix has \emph{support} if a positive main diagonal exists under a column
permutation; a matrix having this property is equivalently said to be
\emph{structurally nonsingular} \cite{davis-book}. A square matrix has
\emph{total support} if every nonzero element occurs in the positive main
diagonal under a column permutation.  A matrix has total support if and only if
there exists a doubly stochastic matrix having the same zero pattern
\cite{perfect-mirsky}. A matrix $A$ is \emph{partly decomposable} if there exist
permutation matrices $P$ and $Q$ such that
\begin{equation} \label{eq:pdecomp}
  P A Q = \pmat{E & 0 \\ C & D},
\end{equation}
where $E$ and $D$ are square matrices. A square matrix is \emph{fully
  indecomposable} if it is not partly decomposable. A fully indecomposable
matrix has total support \cite{brualdi-80}. A matrix $A$ is \emph{reducible} if
there exists a permutation matrix $P$ such that $P A P^T$ has the matrix
structure in \eqref{eq:pdecomp}; otherwise, $A$ is \emph{irreducible}. For
convenience, a matrix is said to be \emph{scalable} if it can be equilibrated.

\begin{theorem}[Sinkhorn and Knopp \cite{sinkhorn-knopp}] \label{thm:sk}
  Let $B$ be a nonnegative square matrix.
  \begin{remunerate}
  \item There exist positive diagonal matrices $R$ and $C$ such that $F \equiv R
    B C$ is doubly stochastic---briefly, $B$ is scalable---if and only if $B$
    has total support.
  \item If $B$ is scalable, then $F$ is unique.
  \item $R$ and $C$ are unique up to a scalar multiple if and only if $B$ is
    fully indecomposable.
  \item The Sinkhorn-Knopp iteration yields a sequence of matrices that
    converges to a unique doubly stochastic matrix, for all initial $r,c > 0$,
    if and only if $B$ has support. If $B$ has support that is not total, then
    $R$ and $C$ have elements that diverge.
  \end{remunerate}
\end{theorem}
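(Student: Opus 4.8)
The plan is to reduce the theorem to the fully indecomposable case and then study a single convex potential whose stationary points are exactly the scalings. Since a matrix with total support can be brought by independent row and column permutations to a direct sum of fully indecomposable blocks (any zero block forced by partial decomposability must actually vanish when total support holds, since an off-diagonal nonzero could not lie on a positive diagonal), and since scaling the whole is equivalent to scaling each block independently, I would assume without loss of generality that $B$ is fully indecomposable. Writing $R = \text{diag}(e^{u})$ and $C = \text{diag}(e^{v})$ with $r = e^u$, $c = e^v$, I would introduce
\[ \phi(u,v) = \sum_{i,j} B_{ij}\, e^{u_i + v_j} - \sum_i u_i - \sum_j v_j, \]
which is convex because $B \ge 0$. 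Its partials are $\partial \phi/\partial u_i = (RBCe)_i - 1$ and $\partial \phi/\partial v_j = (C B^{T} R e)_j - 1$, so the stationary points of $\phi$ are precisely the $(u,v)$ for which $RBC$ is doubly stochastic, and the single invariance $(u,v) \mapsto (u + t e,\, v - t e)$ corresponds exactly to the scalar freedom $R \mapsto \alpha R$, $C \mapsto \alpha^{-1} C$.

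For the forward direction of statement 1, I would show that $\phi$ attains its infimum modulo the invariance exactly when $B$ has total support. The crux is coercivity: along a ray $(u,v) = t(p,q)$ with $(p,q)$ normalized and orthogonal to the invariance direction, $\phi \to +\infty$ iff some nonzero $B_{ij}$ has $p_i + q_j > 0$ or else $\sum_i p_i + \sum_j q_j < 0$; total support---equivalently, the existence of a doubly stochastic matrix of the same pattern---is what guarantees one of these for every such direction, since every nonzero entry lies on a positive diagonal that can absorb the linear decrease by exponential growth. A minimizer then exists and yields $R,C$ with $F = RBC$ doubly stochastic. For the converse, if $B$ is scalable then $F = RBC$ is doubly stochastic, hence by Birkhoff--von Neumann a convex combination of permutation matrices, so every nonzero entry of $F$ lies on a positive diagonal; thus $F$ has total support, and since $R,C$ are positive diagonal, $B$ shares the pattern of $F$ and has total support as well.

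For statement 2, I would characterize $F$ independently of the scaling as the unique minimizer of the strictly convex I-divergence $\sum_{ij} F_{ij}\log(F_{ij}/B_{ij})$ over doubly stochastic matrices whose pattern is contained in that of $B$; the Karush--Kuhn--Tucker conditions force $F_{ij} = r_i B_{ij} c_j$, so every scaling produces this same minimizer. For statement 3, convexity makes the minimizer set of $\phi$ an affine subspace, and $R,C$ are unique up to the scalar multiple precisely when this subspace is the one-dimensional invariance line; I would show that extra directions of degeneracy arise exactly when $B$ is partly decomposable, each independent block contributing its own translational freedom, so that uniqueness up to a scalar holds if and only if $B$ is fully indecomposable.

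Finally, for statement 4 I would recognize the updates as exact block-coordinate minimization of $\phi$: minimizing over $u$ with $v$ fixed gives $r_i = (Bc)_i^{-1}$, and then minimizing over $v$ gives $c_j = (B^T r)_j^{-1}$, which are exactly \eqref{eq:sk-nonsym}. Support guarantees no zero rows or columns, so each update is well defined and strictly positive; when $B$ has total support the potential has a minimizer and convergence of block-coordinate descent on a convex function drives the iterates to it. The main obstacle is support that is not total: here $\phi$ is no longer coercive, no minimizer exists, and some coordinates of $u,v$---hence some elements of $R,C$---must diverge, yet the row and column sums of $RBC$ must still be shown to converge to $e$, so that the \emph{matrix} $RBC$ tends to a doubly stochastic limit even as the \emph{scalings} diverge. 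I expect this, together with the `only if' direction---where absence of support yields, via the Frobenius--K\"onig theorem, a zero submatrix obstructing any doubly stochastic limit---to be the hardest part of the argument.
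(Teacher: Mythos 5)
The first thing to note is that the paper itself contains no proof of Theorem \ref{thm:sk}: it is stated as a quotation of the classical result of Sinkhorn and Knopp \cite{sinkhorn-knopp}, with the remark that parts 1--3 were found independently in \cite{pbs-66}. So your attempt can only be compared against the classical proofs, not against anything in this paper. With that understood: your variational route to parts 1--3 --- the convex potential $\phi(u,v)=\sum_{ij}B_{ij}e^{u_i+v_j}-\sum_i u_i-\sum_j v_j$, coercivity modulo the invariance $(u,v)\mapsto(u+te,v-te)$ characterized through positive diagonals, Birkhoff for necessity of total support, the I-divergence characterization for uniqueness of $F$, and the flat directions of the minimizer set for part 3 --- is a legitimate and essentially sound alternative to Sinkhorn and Knopp's original argument; it is the entropy-minimization/geometric-programming approach that appears elsewhere in the scaling literature (e.g., in work of Khachiyan and Kalantari). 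The block reduction (total support plus partial decomposability forces the off-diagonal block to vanish) is also correct and is exactly what makes the fully indecomposable case sufficient for part 1.

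The genuine gap is part 4, and you have flagged it yourself without closing it. Two distinct things are missing. First, the assertion that exact two-block coordinate descent on a convex function ``drives the iterates to'' a minimizer is not a citable general theorem in your setting: the minimizer set of $\phi$ is unbounded (the scalar invariance), so sublevel-set compactness fails and standard alternating-minimization convergence results do not apply off the shelf; one must either quotient out the invariance or argue directly about the matrices $R^kBC^k$ rather than about $(u^k,v^k)$. Second, and more seriously, the case where $B$ has support that is not total is precisely the case where $\phi$ has no minimizer at all, so the variational framework as you have set it up says nothing about the iteration there --- yet the theorem claims the matrix sequence still converges to a doubly stochastic limit while elements of $R$ and $C$ diverge, and conversely that mere support is necessary. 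This is not a routine remainder: it is the part of the theorem that required most of the work in \cite{sinkhorn-knopp} (a direct analysis of the iteration \eqref{eq:sk-nonsym}, not a potential argument), and an honest ``I expect this to be the hardest part'' leaves the fourth claim of the theorem unproven. As it stands, your proposal establishes parts 1--3 (modulo standard convex-analysis details) and the total-support half of part 4's sufficiency, but not part 4 as stated.
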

Parts 1--3 were independently discovered in \cite{pbs-66}.

\begin{theorem}[Csima and Datta \cite{csima-datta}] \label{thm:csima-datta}
  A symmetric matrix is symmetrically scalable if and only if it has total
  support.
\end{theorem}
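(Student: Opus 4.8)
The statement is an equivalence, so I would prove the two implications separately, and I expect the necessity direction to be immediate while the sufficiency direction carries the real content. Throughout I treat $B$ as a nonnegative symmetric matrix, which is forced anyway: a doubly stochastic matrix is nonnegative, so if $XBX$ is doubly stochastic for positive diagonal $X$, then $B$ itself must be nonnegative.

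For necessity, suppose the symmetric matrix $B$ is symmetrically scalable, so there is a positive diagonal $X$ with $XBX$ doubly stochastic. Since $X$ has strictly positive diagonal entries, $XBX$ has exactly the same zero pattern as $B$. The excerpt already records (citing Perfect and Mirsky) that a matrix has total support if and only if some doubly stochastic matrix shares its zero pattern; applying this to $XBX$ shows that $B$ has total support.

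For sufficiency, the plan is to invoke the nonsymmetric Sinkhorn--Knopp theorem and then symmetrize the resulting scaling. Because $B$ has total support, Part~1 of Theorem~\ref{thm:sk} supplies positive diagonal matrices $R$ and $C$ with $F \equiv RBC$ doubly stochastic. The crucial observation is a transpose trick: since $B = B^T$ and diagonal matrices are symmetric, $F^T = CBR$, and the transpose of a doubly stochastic matrix is doubly stochastic, so $CBR$ is a second doubly stochastic scaling of $B$. Part~2 of Theorem~\ref{thm:sk} asserts that the doubly stochastic matrix obtained by scaling $B$ is \emph{unique}, which forces $F = F^T$ and hence the two entrywise factorizations $F_{ij} = R_{ii} B_{ij} C_{jj} = C_{ii} B_{ij} R_{jj}$. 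I would then define the symmetric scaling as the geometric mean of the two one-sided scalings: set $x_i \equiv (R_{ii} C_{ii})^{1/2}$ and $X \equiv \text{diag}(x)$. Multiplying the two factorizations gives $F_{ij}^2 = R_{ii} C_{ii} R_{jj} C_{jj} B_{ij}^2$ whenever $B_{ij} \neq 0$, and taking positive square roots (using $B_{ij} \ge 0$) yields $F_{ij} = x_i B_{ij} x_j = (XBX)_{ij}$; entries with $B_{ij} = 0$ vanish on both sides trivially. Thus $XBX = F$ is doubly stochastic, so $B$ is symmetrically scalable.

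The main obstacle, and the single place where the symmetry of $B$ is genuinely exploited, is the step that upgrades the two-sided scaling to a symmetric one, namely showing $F = F^T$. I would emphasize that this rests entirely on the uniqueness assertion of Part~2, which holds for every scalable (total-support) matrix and not merely for fully indecomposable ones. Consequently no decomposition of $B$ into fully indecomposable blocks is required, and the geometric-mean construction delivers the desired symmetric scaling directly.
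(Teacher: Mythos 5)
Your proof is correct, and it takes a genuinely different route from the one the paper points to: the paper does not prove this theorem at all, but cites Csima and Datta and remarks only that necessity follows directly from part 1 of Theorem \ref{thm:sk}, while ``proving the sufficiency part requires several steps not needed in the nonsymmetric case''---a reference to Csima and Datta's original, longer argument. Your proposal shows that, granting Theorem \ref{thm:sk} as stated---crucially, the uniqueness of $F$ in part 2---sufficiency is in fact a few-line corollary of the nonsymmetric theorem: $F = RBC$ and $F^T = CBR$ are both doubly stochastic scalings of the symmetric $B$, uniqueness forces $F = F^T$, and then for $B_{ij} \neq 0$ the resulting identity $R_{ii}C_{jj} = R_{jj}C_{ii}$ makes the geometric-mean scaling $x_i = (R_{ii}C_{ii})^{1/2}$ satisfy $XBX = F$ entrywise. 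The argument is airtight, and you correctly isolate why: part 2, unlike part 3, requires only total support, so no decomposition into fully indecomposable or irreducible pieces is needed. Two points of contact are worth noting. First, your construction is exactly the $x = \sqrt{rc}$ recipe the paper itself uses in Section \ref{sec:theory}, where it is justified instead by splitting a reducible symmetric matrix into irreducible blocks and asserting $r({\cal I}) \propto c({\cal I})$ on each block; your global uniqueness argument subsumes, and more cleanly justifies, that passage. Second, for necessity you invoke the zero-pattern characterization of total support \cite{perfect-mirsky}, whereas the paper intends the even more direct observation that $XBX$ doubly stochastic exhibits $B$ as scalable (take $R = C = X$), so part 1 of Theorem \ref{thm:sk} applies; both are immediate. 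As for what each route buys: yours makes the symmetric theorem a short corollary at the price of leaning on the full strength of Sinkhorn--Knopp uniqueness, while Csima and Datta's multi-step argument is what one needs if that uniqueness result is not taken as given.
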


The necessary and sufficient condition of total support in Theorem
\ref{thm:csima-datta} is identical to that in part 1 of Theorem
\ref{thm:sk}. The necessary part follows directly from part 1, but proving the
sufficiency part requires several steps not needed in the nonsymmetric case.

Section 3 of \cite{knight} discusses the symmetric iteration
\begin{equation} \label{eq:sk-sym}
  x^{k+1} = (B x^k)^{-1}
\end{equation}
for symmetric $B$ and sketches a proof of convergence. Not directly addressed is
that the iterates $x^k$ can oscillate and reducible $B$.

If $B$ is irreducible, this oscillation is straightforward and benign. The
resulting scaled matrix is a scalar multiple of a doubly stochastic one. For
example, suppose $\bar B = 1$ and $x^0 = 2$. Then for $k$ even, $x^k = 2$, and
for $k$ odd, $x^k = 1/2$. In general, if symmetric $B$ is irreducible, $X^k B
X^{k+1}$ converges to a doubly stochastic matrix, while $X^{2k} B X^{2k}$ and
$X^{2k+1} B X^{2k+1}$ converge to scalar multiples of a doubly stochastic
matrix, and these scalars are reciprocals of each other.

Somewhat more complicated is reducible $B$. For example, consider the matrix
$\bar B = \text{diag}(1 \ 2)^T$. If $x^0 = e$, the even iterates remain $e$
while the odd iterates are $v \equiv (1 \ 1/2)^T$. $I \bar B V$ is doubly
stochastic, but $v$ is not proportional to $e$. Moreover, $V \bar B V$ is not
simply a scalar multiple of a doubly stochastic matrix. This nonconvergence is
also benign. A reducible symmetric matrix $B$ can be symmetrically permuted to
be block diagonal with each block irreducible. Hence the equilibration problem
is decoupled into as many smaller problems. We can construct a symmetric
equilibrating vector $x$ from the nonsymmetric equilibrating vectors $r$ and $c$
by setting $x = \sqrt{r c}$. For suppose $r$ and $c$ equilibrate $B$ by $R B
C$. Let ${\cal I}$ be the indices corresponding to an irreducible block. Then
$r({\cal I}) \propto c({\cal I})$ and the block $X({\cal I},{\cal I}) B({\cal
  I},{\cal I}) X({\cal I},{\cal I})$ is doubly stochastic. For $\bar B$, the
symmetric equilibration vector is $\sqrt{e v} = (1 \ 1/\sqrt{2})^T$.

These observations suggest that we should write the symmetric Sinkhorn-Knopp
iteration as
\begin{equation} \label{eq:sk-sym-revised}
  y^{k+1} = (B y^k)^{-1}, \quad x^{k+1} = \sqrt{y^{k+1} y^k}.
\end{equation}
Since $x^k$ does not actually play a role in the iteration, in practice, the
square root operation needs to be applied only after the final iteration to
yield the scaling matrix.

\subsection{Stochastic equilibration}
Our algorithms are based on the Sinkhorn-Knopp iteration. The Sinkhorn-Knopp
iteration performs the mvp $B x$ and $B^T x$ for a nonnegative matrix $B$. If
$A$ is a matrix having signed elements, then $B_{ij} = |A_{ij}|^p$ for $p \ge 1$
for equilibration in the $p$-norm, and so $B$ is not available if one does not
have access to the elements of $A$. The key idea, similar to that in
\cite{chen-demmel}, in our algorithms is to compute $B x$ approximately by using
an mvp with $A$ rather than $B$, where $B \equiv A \circ A$, and similarly for
$B^T x$.

Let $a \in \mathbb{R}^n$. If the elements of the random vector $u \in
\mathbb{R}^n$ have zero mean, positive and finite variance, and are iid, then
$\E (a^T u)^2 = \eta \E a^T a$ for finite $\eta > 0$, where E denotes
expectation. For as $\E u_i u_j = 0$ if $i \ne j$, $\E ( \sum_j a_j u_j )^2 = \E
\sum_j a_j^2 u_j^2 = \eta \sum_j a_j^2$, where $\eta = \E u_j^2 > 0$ is
finite. See \cite{chen-demmel} for more on this and related expectations. We use
this fact to approximate $B x$ by computing the mvp $A X^{1/2} u$:
\begin{equation} \label{eq:AXu_stoch}
  \E (A X^{1/2} u)^2 = \eta ((A X^{1/2}) \circ (A X^{1/2})) e = \eta (A \circ A)
  X e = \eta B x.
\end{equation}

To increase the accuracy of the approximation to $Bx$, one could compute the
mean of multiple mvp $A X^{1/2} u$. Then one could construct an approximate
scaling algorithm by replacing the exact computation $B x$ with this estimate,
and similarly for $B^T x$, in the Sinkhorn-Knopp algorithm. However, the method
of \emph{stochastic approximation} \cite{book-hard-stoch-approx} suggests a
better approach. In stochastic approximation, the exact iteration $x^{k+1} = x^k
+ \omega^k f(x^k)$ is replaced by the stochastic iteration $x^{k+1} = x^k +
\omega^k \tilde f(x^k)$, where $\E \tilde f(x^k) = f(x^k)$ and $x$ is sought
such that $f(x) = 0$. Rather than explicitly average multiple realizations of
$\hat f(x^k)$ at each iteration, the stochastic approximation iteration controls
the relative weight of $\hat f$ through $\omega^k$ and alters the iterate $x^k$
at each evaluation of $\hat f$.

Let $\rho \equiv r^{-1}$, $\gamma \equiv c^{-1}$, and $0 < \omega^k <
1$. Consider the iteration
\begin{align}
  \rho^{k+1} &= (1 - \omega^k) \frac{\rho^k}{\|\rho^k\|_1} +
    \omega^k \frac{B c^k}{\|B c^k\|_1} \label{eq:de1a} \\
  \gamma^{k+1} &= (1 - \omega^k) \frac{\gamma^k}{\|\gamma^k\|_1} +
    \omega^k \frac{B^T r^{k+1}}{\|B^T r^{k+1}\|_1}. \notag
\end{align}
This iteration takes a convex combination of the reciprocal of an iterate and
the Sinkhorn-Knopp update when each is normalized by its 1-norm.
Let $u^k$ and $v^k$ be random vectors as before. For the vector $x$, $(x)^2$ is
the element-wise square. Substituting \eqref{eq:AXu_stoch} into this iteration,
we obtain the stochastic iteration
\begin{align}
  y^k &= (A (C^k)^{1/2} u^k)^2 \notag \\
  \rho^{k+1} &= (1 - \omega^k) \frac{\rho^k}{\|\rho^k\|_1} +
             \omega^k \frac{y^k}{\|y^k\|_1} \label{eq:se1a} \\
  z^k &= (A^T (R^{k+1})^{1/2} v^k)^2 \notag \\
  \gamma^{k+1} &= (1 - \omega^k) \frac{\gamma^k}{\|\gamma^k\|_1} +
               \omega^k \frac{z^k}{\|z^k\|_1} \notag.
\end{align}
We implement this iteration in the {\sc Matlab} function {\tt snbin}.
\begin{lstlisting}[language=matlab,basicstyle=\footnotesize]
     function [r c] = snbin(A,nmv,m,n)
     % Stochastic matrix-free binormalization for nonsymmetric real A.
     %   A is a matrix or function handle. If it is a function handle,
     %     then v = A(x) returns A*x and v = A(x,`trans') returns A'*x.
     %   nmv is the number of forward and transpose matrix-vector
     %     product pairs to perform.
     %   m,n is the size of the matrix. It is necessary to specify
     %     these only if A is a function handle.
     %   diag(r) A diag(c) is approximately binormalized (to a scalar).
       op = isa(A,`function_handle');
       if(~op) [m n] = size(A); end
       r = ones(m,1); c = ones(n,1);
       for(k = 1:nmv)
         % omega^k
         alpha = (k - 1)/nmv;
         omega = (1 - alpha)*1/2 + alpha*1/nmv;
         % rows
         s = randn(n,1)./sqrt(c);
         if(op) y = A(s); else y = A*s; end
         r = (1-omega)*r/sum(r) + omega*y.^2/sum(y.^2);
         % columns
         s = randn(m,1)./sqrt(r);
         if(op) y = A(s,`trans'); else y = (s'*A)'; end
         c = (1-omega)*c/sum(c) + omega*y.^2/sum(y.^2);
       end
       r = 1./sqrt(r); c = 1./sqrt(c);
\end{lstlisting}
Our choice of the sequence $\omega^k$ is based on numerical experiments; the
sequence encourages large changes in $d/\|d\|_1$ when $k$ is small and smaller
changes when $k$ is large.

Iteration \eqref{eq:de1a} forms a linear combination of $\rho^k$ and $B
c^k$. One might consider instead forming a linear combination of $r^k$ and $(B
c^k)^{-1}$. In the iteration we use, a reciprocal is taken after forming a
linear combination of the iterate and a random quantity; in contrast, in this
alternative, it is taken before, and of the random quantity. Consequently, the
stochastic iteration corresponding to this alternative iteration is less stable
than \eqref{eq:se1a}.

A straightforward algorithm for the symmetric problem applies {\tt snbin} to the
symmetric matrix $B$ and then returns $\sqrt{r c}$. But numerical experiments
suggest we can do better. For irreducible matrices, the denominators $\|d^k\|_1$ and
$\|B x^k\|_1$ in the iteration
\begin{equation*}
  d^{k+1} = (1 - \omega^k) \frac{d^k}{\|d^k\|_1} + \omega^k \frac{B x^k}{\|B
    x^k\|_1}
\end{equation*}
remove the benign oscillation we observed in Section \ref{sec:theory};
therefore, adjacent iterates, rather than every other one as in \eqref{eq:de1a},
can be combined in a convex sum. This second approach speeds convergence. But it
is not sufficient when applied to reducible matrices. Numerical experiments
support using the second approach for early iterations, when making progress
quickly is important, and then switching to the first approach to refine the
scaling matrix. We implement this strategy in {\tt ssbin}.
\begin{lstlisting}[language=matlab,basicstyle=\footnotesize]
     function x = ssbin(A,nmv,n)
     % Stochastic matrix-free binormalization for symmetric real A.
     %   A is a symmetric real matrix or function handle. If it is a
     %     function handle, then v = A(x) returns A*x.
     %   nmv is the number of matrix-vector products to perform.
     %   [n] is the size of the matrix. It is necessary to specify n
     %     only if A is a function handle.
     %   diag(x) A diag(x) is approximately binormalized (to a scalar).
       op = isa(A,'function_handle');
       if(~op) n = size(A,1); end
       d = ones(n,1); dp = d;
       for(k = 1:nmv)
         % Approximate matrix-vector product
         u = randn(n,1);
         s = u./sqrt(dp);
         if(op) y = A(s); else y = A*s; end
         % omega^k
         alpha = (k - 1)/nmv;
         omega = (1 - alpha)*1/2 + alpha*1/nmv;
         % Iteration
         d = (1-omega)*d/sum(d) + omega*y.^2/sum(y.^2);
         if (k < min(32,floor(nmv/2))) % Ignore reducibility.
           dp = d;
         else     % This block makes ssbin behave like snbin.
           tmp = dp; dp = d; d = tmp;        % Swap dp and d.
         end
       end
       x = 1./(d.*dp).^(1/4);        % In case B is reducible
\end{lstlisting}
The final line implements the square root in \eqref{eq:sk-sym-revised}.

In most iterative algorithms, a measure of the merit of an iterate that requires
little work to evaluate relative to the work in an iteration influences the
behavior of the algorithm. In our algorithms, any procedure to assess the merit
of an iterate would require additional mvp, likely wasting work. Hence the
parameter values in the loop of each algorithm are fixed independent of problem.

\section{Numerical experiments} \label{sec:numexp}
\begin{figure}[tb]
\centering
\includegraphics[width=5in]{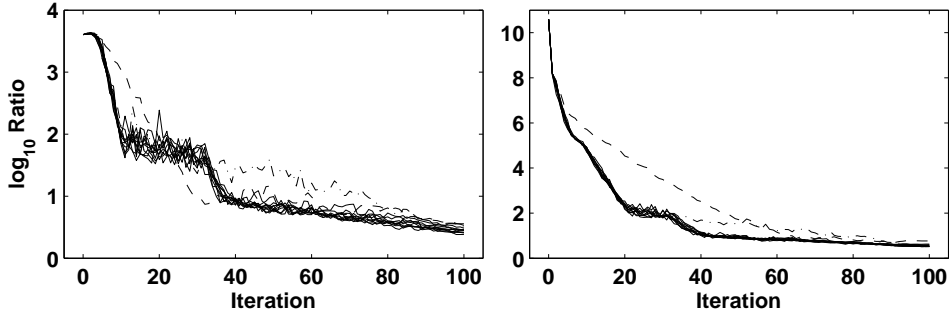}
\caption{Convergence histories for two symmetric problems having sizes 3564
  (left) and 226340 (right). Ten solid lines show individual realizations of the
  algorithm. The dashed line corresponds to {\tt snbin}. The dotted line
  corresponds to not switching to {\tt snbin}-like behavior.}
\label{fig:conv}
\end{figure}

\begin{figure}[tb]
\centering
\includegraphics[width=5in]{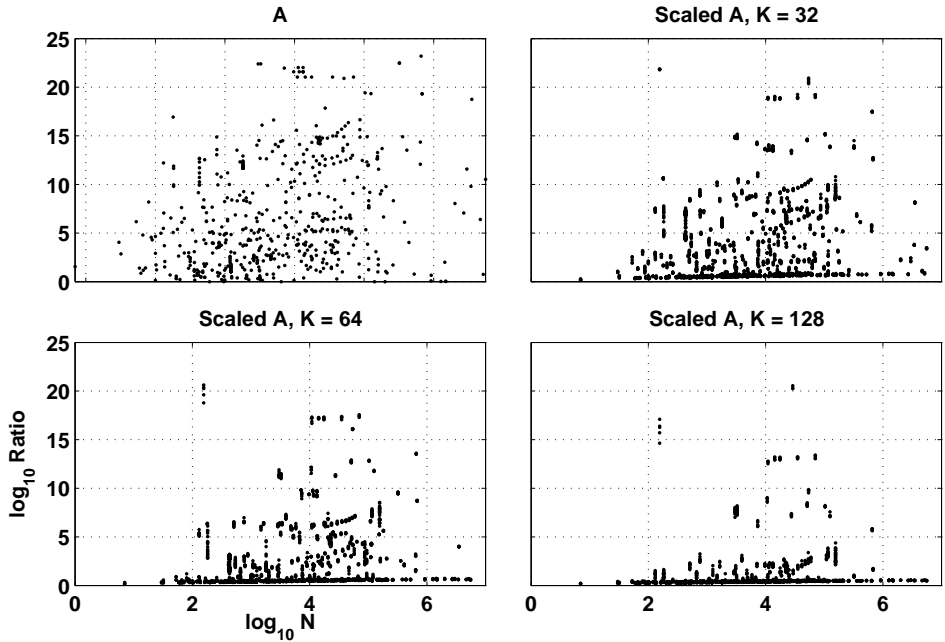}
\caption{Ratio for the original and scaled nonsymmetric matrix vs.~matrix size
  $N$, after the indicated number of iterations, for 741 matrices.}
\label{fig:exp-snbin-a}
\end{figure}

\begin{figure}[tb]
\centering
\includegraphics[width=5in]{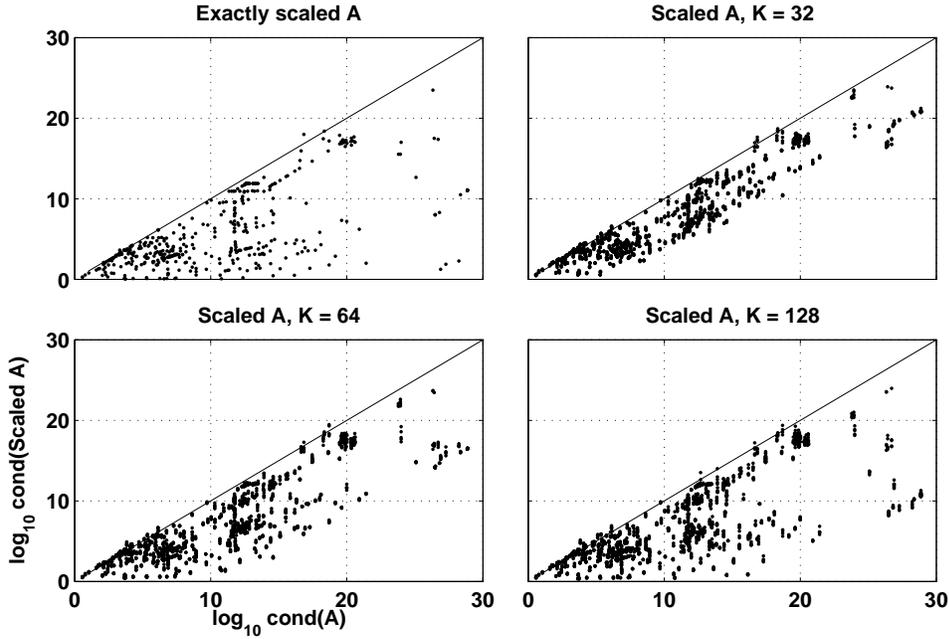}
\caption{Condition number of the scaled nonsymmetric matrix vs.~condition number
  of the original matrix for 519 matrices (matrices having $N \le 2 \times
  10^4$).}
\label{fig:exp-snbin-b}
\end{figure}

\begin{figure}[tb]
\centering
\includegraphics[width=5in]{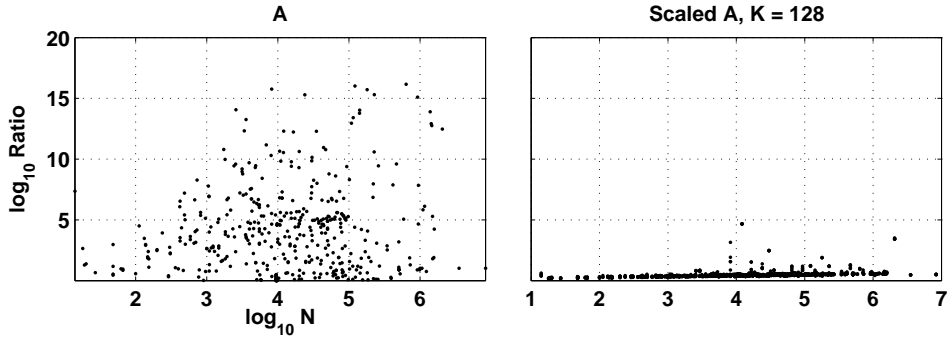}
\caption{Ratios for 466 symmetric matrices. Results for only $K = 128$ are
  shown; trends in $K$ follow those for the nonsymmetric problems.}
\label{fig:exp-ssbin-a}
\end{figure}

\begin{figure}[tb]
\centering
\includegraphics[width=5in]{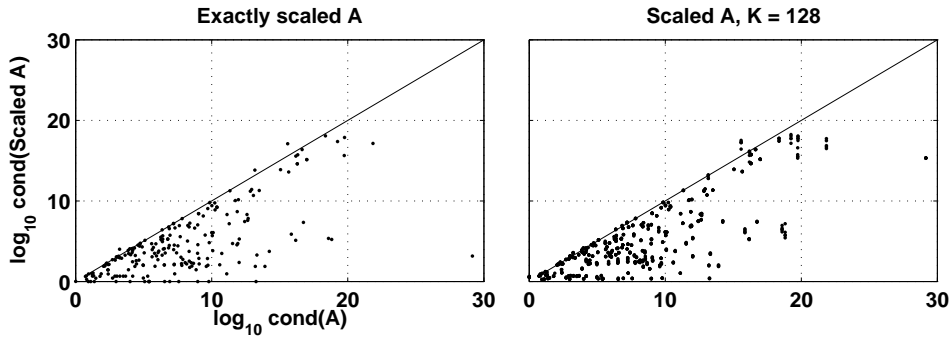}
\caption{Condition numbers for 221 symmetric matrices (matrices having $N \le 2
  \times 10^4$).}
\label{fig:exp-ssbin-b}
\end{figure}

\begin{figure}[tb]
\centering
\includegraphics[width=5in]{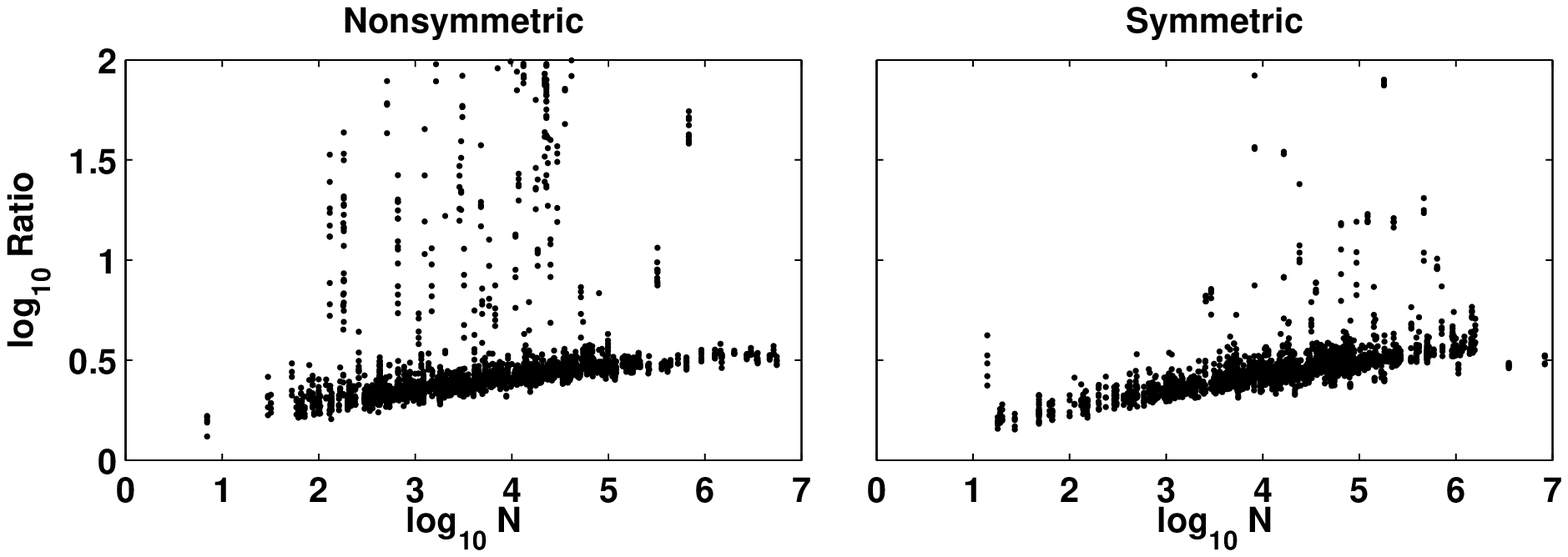}
\caption{A closer look at the ratio as a function of $N$ for $K = 128$
  iterations for nonsymmetric (left) and symmetric (right) matrices.}
\label{fig:exp-N}
\end{figure}

In our numerical experiments, two quantities of the scaled matrices are
measured: condition number if the matrix is not too large; and the ratio of the
largest to smallest row 2-norms (in the nonsymmetric case, row or column,
depending on which gives a larger number), hereafter designated as the
\emph{ratio}.

We test {\tt snbin} and {\tt ssbin} in {\sc Matlab} on matrices in the
University of Florida Sparse Matrix Collection \cite{davis_uf}; these are
obtained by the following queries:
\begin{lstlisting}[language=matlab,basicstyle=\footnotesize]
     index = UFget(`refresh');
     % Symmetric
     sids = find(~index.isBinary & index.numerical_symmetry == 1 &...
                 index.sprank == index.nrows & index.isReal);
     % Square nonsymmetric
     nids = find(~index.isBinary & index.numerical_symmetry < 1 &...
                 index.nrows == index.ncols &...
                 index.sprank == index.nrows & index.isReal);
\end{lstlisting}

First we investigate the behavior of {\tt ssbin} on two problems. Figure
\ref{fig:conv} shows the convergence history, starting with the unaltered
matrix, for 12 runs of {\tt ssbin} on two symmetric problems. The smaller has
size 3564; the larger has size 226340. $\log_{10}$ ratio is used to measure
convergence. The ten closely clustered solid curves correspond to the nominal
algorithm. The dashed curve indicates the slower convergence of simply applying
{\tt snbin}. The dotted curve shows the problem with not eventually switching to
{\tt snbin}-like behavior to address reducibility. The plateau in the solid
curves ends at iteration 32, when the switch is made. The ten solid curves are
closely clustered, indicating the variance in the algorithm's output is small
for any number of requested mvp.

In the performance experiments, for each matrix, the algorithm is run five times
for $K = 32$, $64$, and $128$ iterations. Results are shown in Figures
\ref{fig:exp-snbin-a} and \ref{fig:exp-snbin-b} for nonsymmetric matrices and
\ref{fig:exp-ssbin-a} and \ref{fig:exp-ssbin-b} for symmetric matrices. Figure
\ref{fig:exp-snbin-a} shows that the ratio tends to decrease with $K$, as one
expects. The ratio for the scaled problem, given fixed $K$, grows slowly with
problem size $N$.  Figure \ref{fig:exp-N} investigates this aspect more
closely. It shows details for the case of $K = 128$ iterations for nonsymmetric
(left) and symmetric (right) matrices. Over a range of matrix sizes of more than
six orders of magnitude, the final ratio often ranges from between $1.5$ and
$6$.  Figures \ref{fig:exp-snbin-b} and \ref{fig:exp-ssbin-b} show that the
condition number of the scaled matrix is almost always, and often substantially,
smaller than that of the original matrix: any point that falls below the
diagonal line corresponds to a reduction in condition number. The top-left plots
of Figures \ref{fig:exp-snbin-b} and \ref{fig:exp-ssbin-b} show the condition
numbers of the exactly scaled matrices; the ratios are 1, of course. In the
plots corresponding to the stochastic algorithms, what appears to be a point is
in fact a cluster of the five points resulting from the five separate runs. The
tightness of these clusters again implies that the variance of the outputs of
these algorithms is quite small.

These experiments suggest that {\tt ssbin} and {\tt snbin} are effective
matrix-free approximate equilibration algorithms: a small number---relative to
the size of the matrix---of matrix-vector products is sufficient to
approximately equilibrate the matrix. One application is to scale a matrix whose
elements require too much work to access directly prior to using a
Krylov-subspace iteration to solve a linear system. We recommend performing
approximately 100 iterations, which corresponds to 100 matrix-vector products in
the symmetric case and 200 in the nonsymmetric.


\bibliographystyle{siam}
\bibliography{../mybib}

\end{document}